\documentclass[12pt,leqno]{amsart}
\usepackage{amscd,times,fullpage}
\usepackage{ucs}
\usepackage[utf8x]{inputenc}

\usepackage{amsthm,amsmath}
\usepackage{amssymb}
\usepackage{graphicx}
\usepackage{amsthm}
\usepackage{enumerate}
\usepackage{tikz-cd}
\usetikzlibrary{cd}
\usetikzlibrary{positioning}
\usetikzlibrary{matrix}

\setlength{\headheight}{6.15pt}
\setlength{\headsep}{0.5cm}


\newcommand{\lra}{\longrightarrow}             
\newcommand{\R}{\mathbb{R}}

\newcommand{\Q}{\mathbb{Q}}
\newcommand{\CP}{\mathbb{C}\mathrm{P}}

\newcommand{\Sec}{\mathrm{Sec}}
\newcommand{\Ric}{\mathrm{Ric}}

\newcommand{\C}{\mathbb{C}}            
\newcommand{\de}{\partial}          

\newcommand{\K}{K\"{a}hler }

\newcommand{\ov}[1]{\overline{#1}}

\newcommand{\deb}{\ov\partial}

\newcommand{\di}{{\operatorname{d}}}


\newcommand{\Id}{\operatorname{Id}}

\newcommand{\F}{\mathcal{F}}

\newcommand{\D}{\mathcal{D}}

\newcommand{\Lie}{\mathcal{L}}

\newtheorem{theor}{Theorem}

\newtheorem{defin}{Definition}[section]

\newtheorem{cor}{Corollary}

\newtheorem{remark}{Remark}

\newtheoremstyle{colon}%
{}
{}
{\itshape}
{}
{\bfseries}
{:}
{ }
{}

\theoremstyle{colon}
\newtheorem*{question}{Question}

\begin{document}

\title[Sasakian immersions of Sasaki-Ricci solitons into Sasakian space forms]{Sasakian immersions of Sasaki-Ricci solitons into Sasakian space forms}

\author{G.~Placini}
\address{Dipartimento di Matematica e Informatica, Universit\'a degli studi di Cagliari, Via Ospedale 72, 09124 Cagliari, Italy}
\email{giovanni.placini@unica.it}

\date{\today ; {\copyright \ G.~Placini 2020}}


\begin{abstract}
Let $(g,X)$ be a Sasaki-Ricci soliton on a Sasakian manifold $S$. 
We prove that if $(S,g)$ admits a local Sasakian immersion in a Sasakian space form $S(N,c)$ of constant $\phi$-sectional curvature $c$, then $S$ is $\eta$-Einstein and its $\eta$-Einstein constants are rational. 
Moreover, if $c\leq -3$, $S$ is locally equivalent to the Sasakian space form $S(n,c)$ and its $\eta$-Einstein constants are determined by $c$. 
Further results are obtained in the compact setting, i.e. when $c>-3$, under additional hypotheses.
\end{abstract}
 
\maketitle


\section{Introduction and statements of the main results}\label{sectionint}

Since the publication of \cite{boyergalicki08} Sasakian geometry has received growing interested. This is possibly due to the connections with physics and the abundance of structures that concur in a Sasakian manifold. This abundance allows one to study Sasakian geometry from several points of view.
A Sasakian manifold $S$ is a contact manifold endowed with a compatible metric $g$ and holomorphic structure transverse to the Reeb foliation. The cases where the metric satisfies some additional property are widely studied. An instance of such properties is given by Sasaki-Einstein metrics. Sasaki-Einstein manifolds have drown the attention of many mathematicians in the last few decades. A generalization of Sasaki-Einstein metrics is represented by $\eta$-Einstein metrics, i.e., transversally Einstein metrics. Namely, Sasakian manifolds that satisfy the relation
$$
\Ric_g=\lambda g+\nu \eta\otimes\eta
$$
for some constants $\lambda,\nu \in\R$, where $\Ric_g$ is the Ricci tensor of the metric and $\eta$ denotes the contact form. 
We refer the reader to \cite{boyergalicki08,boyergalickimatzeu06} for an introduction. A further generalization of such metrics is given by Sasaki-Ricci solitons (SRS for short). These have been introduced in \cite{futakionowang09} as special solutions of the Sasaki-Ricci flow of \cite{smoczykwangzhang10}. Since then SRS have received growing interest, see for instance \cite{petrecca16,tadano15}.

Sasakian geometry seats between two K\"ahler geometries. Namely, the K\"ahler structure of the cone and the one transverse to the Reeb foliation. For this reason it is often referred to as the odd dimensional counterpart to K\"ahler geometry. It is then natural to try to extend known results for K\"ahler manifolds to the Sasakian setting. Such an instance is given by the problem of immersing certain classes of K\"ahler manifolds into complex space forms. This translates in the Sasakian setting to
\begin{question}
Which Sasakian manifolds admit a Sasakian immersion into a Sasakian space form?
\end{question}
The question above is clearly too general so that it needs to be specialized to specific classes in order to be answered.
For instance the problem has been studied when the immersion is taken to be CR, i.e. to preserve the underlying CR structure but not necessarily the metric. In fact, Ornea and Verbitsky \cite{orneaverbitsky07} proved that any compact Sasakian manifold admits a CR embedding in a Sasakian manifold diffeomorphic to a sphere.
 The case of $\eta$-Einstein manifolds admitting a Sasakian immersion has been, in some instances only partially, solved in \cite{bandecappellettimontanoloi20,cappellettimontanoloi19,kenmotsu69}. On the other hand, this question has not been addressed for Sasaki-Ricci solitons. Our goal in this article is to characterize SRS that admit a (local) immersion into a Sasakian space form.

In light of the K\"ahler/Sasaki parallel, our results extend those of \cite{loimossa20} to the Sasakian setting.
The first of these is the following

\begin{theor}\label{ThmMain1}
Let  $S$  be a $(2n+1)$-dimensional complete regular Sasakian manifold endowed with a Sasaki-Ricci soliton. 
Suppose there exists a neighbourhood $U_p$ of a point $p\in S$ and an immersion $\psi\colon U_p\lra S(N,c)$ into a Sasakian space form $S(N,c)$ with $c\leq -3$ .

 Then $S$ is Sasaki equivalent to $S(n,c)/\Gamma$ for a discrete subgroup $\Gamma$ of the group of Sasakian tranformations of $S(n,c)$. In particular, $S$ is $\eta$-Einstein and its $\eta$-Einstein constants $(\lambda,\nu)$ are determined by $c$.

If additionally $U_p=S$, then $\Gamma=0$ and, up to a Sasakian transformation of $S(n,c)$, $\psi$ is of the form $$\psi(z,t)=(z,0,t+a)$$ for a constant $a$.
\end{theor}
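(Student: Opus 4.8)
The plan is to push the whole problem down to the transverse \K{} geometry, invoke the \K{}--Ricci soliton rigidity of \cite{loimossa20} there, and then lift the conclusion back up to $S$. Because $S$ is regular, the Reeb orbits are the fibres of a Boothby--Wang fibration over a \K{} manifold of complex dimension $n$ whose \K{} structure is the transverse one $(g^T,\omega^T,J^T)$, and the Sasaki--Ricci soliton equation is, by definition, the transverse identity $\rho^T-\lambda\,\omega^T=\Lie_X\omega^T$ with $X$ basic and transversally holomorphic; hence on each local leaf space $V$ the pair $(g^T,X)$ is an honest local \K{}--Ricci soliton. A Sasakian immersion preserves $(\phi,\xi,\eta,g)$, so $\psi$ carries $\xi$ to $\xi$ and restricts to a transversally holomorphic isometry of the contact distributions; it therefore maps Reeb orbits to Reeb orbits and descends to a local \K{} immersion $\varphi\colon V\lra M(N,c+3)$, where $M(N,c+3)$ is the transverse \K{} manifold of $S(N,c)$, the complex space form of constant holomorphic sectional curvature $c':=c+3$. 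The hypothesis $c\le-3$ says exactly $c'\le0$, so $M(N,c')$ is $\C^N$ (if $c=-3$) or $\CH^N$ (if $c<-3$).

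Next I apply the local \K{} immersion theorem for \K{}--Ricci solitons of \cite{loimossa20} to $\varphi\colon V\to M(N,c')$. Its analytic heart, where the sign $c'\le0$ is essential, compares the pulled-back ambient diastasis $\Phi=\varphi^*D$ — a global potential for $g^T$ of definite Calabi type precisely because $c'\le0$ — with the Ricci potential $-\log\det g^T$ through the soliton equation $-\log\det g^T=\lambda\Phi+f+(\text{pluriharmonic})$, where $X=\operatorname{grad}^{1,0}f$; Calabi's coefficient hierarchy then forces the soliton potential $f$ to be constant, so $X$ is transversally Killing and $g^T$ is transversally \K{}--Einstein, $\rho^T=\lambda\omega^T$. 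For $c'\le0$ one gets more: an Einstein \K{} submanifold of $\C^N$ or $\CH^N$ is totally geodesic (Umehara), so $V$ is holomorphically isometric to an open set of the $n$-dimensional complex space form $M(n,c')$. The main obstacle is not this imported analytic engine but making the descent faithful enough that it applies verbatim: one must check that $X$ really is basic, that $\varphi$ is a genuine local \K{} immersion with the soliton structure intact, and that the diastasis of $g^T$ is the pullback of the ambient one, so that the hypotheses of \cite{loimossa20} hold on $V$.

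Lifting back is now routine. Transverse \K{}--Einstein with Einstein constant fixed by $c'$ is equivalent to $S$ being $\eta$-Einstein, and the constants $(\lambda,\nu)$ are those of $S(n,c)$, hence determined by $c$. Since the local leaf space of $S$ is $M(n,c')$, which is the transverse \K{} manifold of $S(n,c)$, the Boothby--Wang correspondence makes $S$ locally Sasaki equivalent to $S(n,c)$. As $S$ is complete and $S(n,c)$ is a complete simply connected homogeneous Sasakian space form — for $c\le-3$ its base $M(n,c')$ is contractible — a standard developing-map / Cartan--Ambrose--Hicks argument upgrades this to a global Sasaki equivalence $S\cong S(n,c)/\Gamma$, with $\Gamma\cong\pi_1(S)$ acting by Sasakian transformations of $S(n,c)$.

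For the addendum, suppose $U_p=S$ and lift $\psi$ along the universal cover $\pi\colon S(n,c)\to S$ to a global Sasakian immersion $\w{\psi}:=\psi\circ\pi\colon S(n,c)\lra S(N,c)$, which satisfies $\w{\psi}\circ\gamma=\w{\psi}$ for every $\gamma\in\Gamma$. Transversally, $\w{\psi}$ is a global \K{} immersion $M(n,c')\to M(N,c')$ of equal constant holomorphic sectional curvature; by Calabi's rigidity theorem it is congruent, via an ambient holomorphic isometry, to the standard totally geodesic linear embedding. Lifting that isometry to a Sasakian transformation of $S(N,c)$ and absorbing the Reeb freedom into a constant $a$ puts $\w{\psi}$ in the form $(z,t)\mapsto(z,0,t+a)$; in particular $\w{\psi}$ is injective. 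Then $\w{\psi}\circ\gamma=\w{\psi}$ forces $\gamma=\Id$ for all $\gamma\in\Gamma$, so $\Gamma=0$, $S=S(n,c)$, and $\psi=\w{\psi}$ has the asserted form.
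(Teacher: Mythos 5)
Your reduction to the transverse geometry and the appeal to \cite{loimossa20} is the same opening move as the paper's proof, but your argument has a genuine gap at the globalization step. Everything you actually establish --- the vanishing of the soliton potential, transverse K\"ahler--Einstein-ness, total geodesicity via Umehara, and the local holomorphic isometry with the model space form --- holds only on $V_x=\pi(U_p)$, the projection of the \emph{single} neighbourhood $U_p$ on which $\psi$ is defined. Your third paragraph then asserts that ``the local leaf space of $S$ is $M(n,c')$'' and runs a developing-map/Cartan--Ambrose--Hicks argument; but that argument requires local equivalence with the model (equivalently, constant $\phi$-sectional curvature, or the $\eta$-Einstein condition) at \emph{every} point of $S$, and nothing in your proposal propagates the conclusion from $\pi^{-1}(V_x)$ to the rest of $S$. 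A soliton metric on a connected manifold is not, on the face of it, forced to be Einstein everywhere just because it is Einstein on one open set, so as written the global claims (including $S\cong S(n,c)/\Gamma$) do not follow.

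The paper bridges exactly this gap with two ingredients you never invoke: the soliton equation makes the transverse metric $g^T$ real-analytic by Kotschwar's local version of Bando's theorem \cite{kotschwar13}, and Calabi's theory \cite{calabi53} then shows that a real-analytic K\"ahler metric admitting a local K\"ahler immersion into $K(N,c+3)$ near one point admits such an immersion near \emph{every} point of $K$. This lets one apply \cite{loimossa20} around every $y\in K$, so the sets $\pi^{-1}(V_y)$ cover $S$ and $S$ is $\eta$-Einstein globally; only then does the classification of $\eta$-Einstein immersions into non-compact Sasakian space forms \cite{bandecappellettimontanoloi20} yield both the quotient structure $S(n,c)/\Gamma$ and the normal form $\psi(z,t)=(z,0,t+a)$ (which you instead sketch by hand via Calabi rigidity --- plausible, but the lifting of the ambient isometry to a Sasakian transformation would need detail). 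Alternatively, with Kotschwar's analyticity in hand you could finish by unique continuation: $\Ric_{g^T}-\lambda g^T$ is a real-analytic tensor vanishing on the open set $V_x$, hence identically on the connected manifold $K$. One further small correction: Boothby--Wang \cite{boothbywang58} applies to \emph{compact} regular Sasakian manifolds; in the complete non-compact case the fibration $\pi\colon S\lra K$ over a K\"ahler manifold comes from Reinhart \cite{reinhart59}, which is where the completeness hypothesis enters.
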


In contrast with the K\"ahler analogue of Theorem~\ref{ThmMain1} proved in \cite{loimossa20}, we are not assuming the existence of a global immersion of $S$ into a Sasakian space form. 
Furthermore, while in \cite{loimossa20} the Einstein constant is a rational multiple of the holomorphic curvature, in the Sasakian setting the Einstein constants are completely determined by the $\phi$-sectional curvature $c$, cf. \eqref{EqSasakiRicciHolo}.
Namely, we have 
$$\Ric_{g}=\left( \dfrac{n+1}{2}(c+3)-2\right) g+\left( 2n+2-\dfrac{n+1}{2}(c+3) \right)\eta\otimes\eta\,\, .$$

Our second result addresses the problem in the case where the Sasakian space form is compact, i.e. the standard sphere $S^{2N+1}$.
Very little is known in this setting, compared to the non-compact case, even in the Sasakian manifold is assumed to be $eta$-Einstein, see \cite{cappellettimontanoloi19}. For this reason we state a general result and several corollaries under additional or different hypotheses. 

\begin{theor}\label{ThmMain2}
Let  $S$  be a $(2n+1)$-dimensional complete regular Sasakian manifold endowed with a Sasaki-Ricci soliton. 
	Suppose there exists a neighbourhood $U_p$ of a point $p\in S$ and an immersion $\psi\colon U_p\lra S^{2N+1}$ into the standard Sasakian sphere.
    Then $(S,g)$ is a $\eta$-Einstein Sasakian manifold whose $\eta$-Einstein constants $(\lambda,\nu)$ are given by $\lambda=4\mu -2$ for some $\mu\in\Q$.
\end{theor} 
Theorem~\ref{ThmMain1} and Theorem~\ref{ThmMain2} have interesting consequences when investigating which SRS admit a global immersion into a Sasakian space forms. 
It is natural to consider first $\eta$-Einstein Sasakian manifolds, as these are trivially SRS.
Combining the results in the compact and non-compact cases we see that the existence of an immersion into certain Sasakian space forms constrains the possible values of the $\eta$-Einstein constants $(\lambda,\nu)$.
\begin{cor}\label{Cor1}
	A complete, $\eta$-Einstein manifold with rational $\eta$-Einstein constants cannot be immersed in a Sasakian space form $S(N,c)$ of irrational $\phi$-sectional curvature $c\in\R\setminus\Q$.
\end{cor}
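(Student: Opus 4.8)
The plan is to argue by contradiction, exploiting the fact that an $\eta$-Einstein Sasakian manifold is automatically a Sasaki-Ricci soliton (take the soliton field $X=0$), so that both Theorem~\ref{ThmMain1} and Theorem~\ref{ThmMain2} are available. First I would record the elementary but crucial normalisation that on any $(2n+1)$-dimensional Sasakian manifold one has $\Ric_g(\xi,\xi)=2n$, whence $\lambda+\nu=2n$; thus the pair $(\lambda,\nu)$ is rational if and only if $\lambda$ is, and it suffices to track the single constant $\lambda$. Assuming now that a complete $\eta$-Einstein $S$ with $\lambda\in\Q$ admits a (local) Sasakian immersion into some $S(N,c)$ with $c\in\R\setminus\Q$, I split the argument according to the sign of $c+3$ (the value $c=-3$ being rational, hence excluded). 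If $c<-3$, Theorem~\ref{ThmMain1} applies directly and forces
$$\lambda=\frac{n+1}{2}(c+3)-2.$$
Since $\frac{n+1}{2}$ is a nonzero rational and $c+3$ is irrational, the right-hand side is irrational, contradicting $\lambda\in\Q$.

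The case $c>-3$ is where the real work lies, as here $S(N,c)$ is not covered by Theorem~\ref{ThmMain1} and Theorem~\ref{ThmMain2} is stated only for the round sphere. The idea is to transport the immersion to the sphere by a $\D$-homothetic deformation. Every Sasakian space form with $c>-3$ is obtained from $S^{2N+1}=S(N,1)$ by the $\D$-homothetic deformation of parameter $a=\tfrac{4}{c+3}>0$, under which the $\phi$-sectional curvature transforms by $\tilde c=\tfrac{4}{a}-3$. Applying the inverse deformation, of parameter $1/a=\tfrac{c+3}{4}$, simultaneously to the source $S$ and to the target turns $\psi$ into a Sasakian immersion of a deformed manifold $\tilde S$ into $S^{2N+1}$ (the immersion is preserved because the deformed structure tensors on both sides are given by the same formula in $\eta,\xi,\phi,g$, which $\psi$ already intertwines). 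Since $\D$-homothetic deformations with positive parameter preserve completeness, regularity and the $\eta$-Einstein condition, $\tilde S$ still satisfies the hypotheses of Theorem~\ref{ThmMain2}. I would then invoke the transformation law for the transverse Einstein constant: writing it as $\lambda+2$ and recalling that rescaling the transverse metric by $b$ divides the constant by $b$, one obtains $\tilde\lambda+2=\tfrac{4(\lambda+2)}{c+3}$. Theorem~\ref{ThmMain2} now yields $\tilde\lambda=4\mu-2$ for some $\mu\in\Q$, i.e.
$$\frac{\lambda+2}{c+3}=\mu\in\Q.$$
As $\lambda+2\in\Q$, if $\mu\neq 0$ this forces $c+3\in\Q$, contradicting the irrationality of $c$.

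The remaining, and genuinely delicate, point is the degenerate alternative $\mu=0$, which occurs exactly when $\lambda=-2$, that is when the transverse K\"ahler metric is Ricci-flat. Here the rationality obstruction is vacuous and a separate argument is needed, so I expect this to be the main obstacle. I would resolve it at the level of the induced transverse K\"ahler immersion into the positive transverse space form $\CP^N$: a transversally Calabi-Yau Sasakian manifold would produce a Ricci-flat K\"ahler-Einstein submanifold of $\CP^N$, which is impossible, since a K\"ahler-Einstein manifold admitting a K\"ahler immersion into $\CP^N$ necessarily has strictly positive Einstein constant. This excludes $\lambda=-2$ in the range $c>-3$ and completes the contradiction in every case. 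The two clean cases rest on Theorems~\ref{ThmMain1} and~\ref{ThmMain2} together with arithmetic, while the care required for the $c>-3$ reduction—verifying that the $\D$-homothetic transport preserves the full set of hypotheses of Theorem~\ref{ThmMain2} and disposing of the Ricci-flat borderline—is the part that demands the most attention.
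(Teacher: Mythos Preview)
The paper gives no explicit proof of this corollary, treating it as immediate from Theorems~\ref{ThmMain1} and~\ref{ThmMain2}; your proposal is therefore far more detailed than anything the paper supplies. Your treatment of $c<-3$ via Theorem~\ref{ThmMain1} and the formula~\eqref{EqSasakiRicciHolo} is exactly what the paper intends. For $c>-3$ you take a different route: rather than your $\D$-homothetic transport to the standard sphere, the argument implicit in the paper is simply to rerun the \emph{proof} of Theorem~\ref{ThmMain2} with $S(N,c)$ in place of $S^{2N+1}$, so that the induced transverse immersion lands in $\CP^N(c+3)$ and the Loi--Mossa result (which applies to any definite complex space form) gives $\lambda+2=(c+3)\mu$ with $\mu\in\Q$ directly. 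Your deformation argument is correct and reaches the same relation, but it is extra machinery; the direct route avoids verifying that the $\D$-homothety preserves all the hypotheses of Theorem~\ref{ThmMain2}.

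Your isolation of the degenerate case $\mu=0$ (i.e.\ $\lambda=-2$, transversally Ricci-flat) is a genuine point the paper does not address. However, the resolution you propose is not clearly available: the assertion that a K\"ahler--Einstein manifold admitting a K\"ahler immersion into $\CP^N$ necessarily has strictly positive Einstein constant is Hulin's theorem for \emph{compact} submanifolds, and there is no local version to invoke here, where the leaf space need not be compact and the immersion is only defined on a neighbourhood. As written, this step is a gap. To close it you would need either to upgrade to a global immersion and argue compactness of the source before citing Hulin (and completeness alone does not force this, even with a compact target), or to produce a genuinely local obstruction to Ricci-flat K\"ahler submanifolds of $\CP^N$. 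The paper does not close this gap either.
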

The assumptions of completeness ensures that the leaf space is a manifold, i.e., that the Sasakian manifold $S$ fibers over a K\"ahler manifold.
Next we concentrate on some consequences of Theorem~\ref{ThmMain2}. Firstly, let us consider the case where $S$ is compact and the codimension of the immersion is arbitrary, cf. Theorems $1$ and $3$ in \cite{cappellettimontanoloi19}.

\begin{cor}\label{Cor2}
    Let  $S$  be a $(2n+1)$-dimensional compact Sasakian manifold endowed with a Sasaki-Ricci soliton. 
	Suppose there exists an immersion $\psi\colon S\lra S^{2N+1}$ into the standard Sasakian sphere.
	Then $(S,g)$ is either a $\eta$-Einstein Sasakian manifold with rational $\eta$-Einstein constants $(\lambda>2n,\nu)$ or it is Sasaki equivalent to $S^{2n+1}$.
\end{cor}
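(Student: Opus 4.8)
The strategy is to reduce Corollary~\ref{Cor2} to the classification of compact $\eta$-Einstein Sasakian submanifolds of the standard sphere obtained in \cite{cappellettimontanoloi19}, using Theorem~\ref{ThmMain2} only to supply the $\eta$-Einstein property together with the rationality of the constants. First I would verify that the hypotheses of Theorem~\ref{ThmMain2} hold: compactness of $S$ gives completeness, and the global immersion $\psi\colon S\lra S^{2N+1}$ restricts to an immersion of any neighbourhood $U_p$ of a point, so the local immersion hypothesis is met. Theorem~\ref{ThmMain2} then yields that $(S,g)$ is $\eta$-Einstein with $\lambda=4\mu-2$ for some $\mu\in\Q$; in particular $\lambda\in\Q$. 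Because every Sasakian metric satisfies $\Ric_g(\xi,\xi)=2n$ for the Reeb field $\xi$, evaluating $\Ric_g=\lambda g+\nu\,\eta\otimes\eta$ on $\xi$ forces $\lambda+\nu=2n$, whence $\nu=2n-\lambda\in\Q$. Thus both $\eta$-Einstein constants $(\lambda,\nu)$ are rational, and the remaining task is purely one about the range of $\lambda$.

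At this point $S$ is a compact $\eta$-Einstein Sasakian manifold carrying a global Sasakian immersion into $S^{2N+1}$, which is exactly the configuration classified in Theorems~$1$ and $3$ of \cite{cappellettimontanoloi19}. The second step is to invoke that classification, whose dichotomy is governed by whether the invariant submanifold $\psi(S)$ is totally geodesic. In the non totally geodesic case the classification shows that $S$ is properly $\eta$-Einstein with constants constrained to the range $\lambda>2n$ (equivalently $\nu<0$); combined with the rationality established above this gives the first alternative, an $\eta$-Einstein manifold with rational constants $(\lambda>2n,\nu)$. In the totally geodesic case one lands in the Sasaki--Einstein regime $\lambda=2n$, $\nu=0$, and the Sasakian analogue of Calabi's rigidity theorem from \cite{cappellettimontanoloi19} identifies $S$, up to a Sasakian transformation, with the totally geodesic $S^{2n+1}\subset S^{2N+1}$, so that $S$ is Sasaki equivalent to the standard sphere. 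Assembling the two cases produces the asserted alternative.

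The main obstacle is this second step: while Theorem~\ref{ThmMain2} carries the analytic weight of producing the $\eta$-Einstein structure from a merely local immersion, the sharp constraint on $\lambda$ and the rigidity in the equality case genuinely require the compact, global classification of \cite{cappellettimontanoloi19}. This is where compactness is indispensable, since the argument exploits the quantized (projective) nature of the transverse Kähler class of a compact invariant submanifold of the sphere, a feature absent from the local setting of Theorem~\ref{ThmMain2}. A secondary point I would check is that the regularity hypothesis needed to pass to the transverse Kähler geometry is compatible with the compact hypothesis of the corollary, which is either inherited from the standing assumptions or arranged by the usual reduction to the quasi-regular case.
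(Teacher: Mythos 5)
Your proposal is correct and takes essentially the same route as the paper: the paper likewise establishes the $\eta$-Einstein property with rational constants via the Boothby--Wang fibration, the covered K\"ahler immersion into $\CP^N$ and \cite{loimossa20} (the content of Theorem~\ref{ThmMain2}), and then invokes \cite[Theorems~1 and 3]{cappellettimontanoloi19} to obtain $\lambda\geq 2n$, with the equality case $\lambda=2n$ forcing Sasaki equivalence to $S^{2n+1}$. The one point you left hedged, regularity, is settled by the first of your two alternatives: $S$ is regular because it admits a Sasakian immersion into the regular sphere (\cite[Proposition~1]{cappellettimontanoloi19}), and no reduction to the quasi-regular case is needed.
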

Notice that in this case $S$ is necessarily regular since it can be immersed into a regular Sasakian manifold.
Lastly we turn our attention to the case of small codimension to prove the two following corollaries, cf. part $(i)$ of the main theorem of \cite{kenmotsu69} and \cite[Theorem~2]{cappellettimontanoloi19}.

\begin{cor}\label{Cor3}
Let  $S$  be a $(2n+1)$-dimensional complete (not necessarily compact) Sasakian manifold endowed with a Sasaki-Ricci soliton. 
	Suppose there exists an immersion $\psi\colon S\lra S^{2n+3}$ into the standard Sasakian sphere.
	Then $(S,g)$ is Sasaki equivalent to $S^{2n+1}$ or the Boothby-Wang bundle over the complex quadric $Q_n\in\CP^{n+1}$.
\end{cor}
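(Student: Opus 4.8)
The plan is to reduce the statement to the classification of complete Einstein K\"ahler hypersurfaces of $\CP^{n+1}$, and then to read off the circle bundle upstairs. First I would invoke Theorem~\ref{ThmMain2}: the global immersion $\psi$ restricts to a local immersion near any point, so $(S,g)$ is $\eta$-Einstein with constants $(\lambda,\nu)$. Since $S$ is complete and regular, the Reeb flow integrates to a free $S^1$-action and the Boothby--Wang construction presents $S$ as a principal circle bundle $\pi\colon S\to M$ over a complete K\"ahler manifold $M$ of complex dimension $n$. Under this correspondence the $\eta$-Einstein condition on $S$ translates into $M$ being K\"ahler--Einstein, the transverse Einstein constant being $\lambda+2$. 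The target is likewise the Boothby--Wang bundle $S^{2n+3}\to\CP^{n+1}$ associated to the Hopf fibration.

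The key geometric step is to descend $\psi$ to the leaf spaces. Because a Sasakian immersion preserves all the structure tensors, one has $\psi_*\xi_S=\xi_{S^{2n+3}}$ along $\psi$, so $\psi$ intertwines the two Reeb flows and is equivariant for the corresponding $S^1$-actions; hence it carries fibres of $\pi$ into Hopf fibres and induces a well-defined holomorphic isometric map $\varphi\colon M\to\CP^{n+1}$. Injectivity of $d\varphi$ follows because $\psi$ is an immersion and already accounts for the Reeb direction, so the horizontal part of $d\psi$ descends to an injective differential. As $\dim_\C M=n=\dim_\C\CP^{n+1}-1$, the map $\varphi$ realises $M$ as a complex hypersurface, and completeness of $S$ together with compactness of the fibres forces $M$ to be complete.

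At this point $M$ is a complete Einstein K\"ahler hypersurface of $\CP^{n+1}$, and I would appeal to the classification of such hypersurfaces (Smyth's theorem, in the form used in part $(i)$ of the main theorem of \cite{kenmotsu69} and in \cite[Theorem~2]{cappellettimontanoloi19}): the Einstein condition forces the second fundamental form to be parallel, and the only complete possibilities are the totally geodesic $\CP^n$ and the complex quadric $Q_n\subset\CP^{n+1}$. Note that this in particular forces $M$ to be compact, so the weaker completeness hypothesis still yields the stated compact models. Transporting the conclusion back up the fibration finishes the argument: in the first case the circle bundle over $\CP^n$ obtained by restricting the Hopf fibration is the standard Hopf bundle, so $S$ is Sasaki equivalent to $S^{2n+1}$; in the second case $S$ is, by construction, the Boothby--Wang bundle over $Q_n$.

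The main obstacle I anticipate lies in the descent and in the bookkeeping of the final identification rather than in the classification itself. One must verify carefully that $\varphi$ is genuinely well defined and an immersion, not merely that fibres map into fibres set-theoretically, which requires the $S^1$-equivariance coming from $\psi_*\xi_S=\xi_{S^{2n+3}}$. In the case $M=\CP^n$ one must further check that the induced circle bundle is exactly the Hopf bundle, and not a nontrivial power giving a lens-space quotient; this is precisely what pins down $S^{2n+1}$. Finally, one should confirm that the $\eta$-Einstein structure produced by Theorem~\ref{ThmMain2} is compatible with the transverse Einstein structure descending to $M$, so that the soliton reduces to the $\eta$-Einstein case throughout.
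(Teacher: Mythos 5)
Your argument is correct in substance and reaches the paper's conclusion, but it diverges from the paper at the decisive step. Both proofs begin identically: the local immersion hypothesis plus the soliton structure give, via the argument of Theorem~\ref{ThmMain1}/Theorem~\ref{ThmMain2} (Kotschwar's analyticity, Calabi, and \cite{loimossa20} on the leaf space), that $S$ is $\eta$-Einstein. From there the paper stays entirely in the Sasakian category: having an $\eta$-Einstein invariant submanifold of codimension two in the Sasakian sphere, it simply quotes part $(i)$ of the main theorem of \cite{kenmotsu69}, which already delivers the dichotomy $S^{2n+1}$ versus the Boothby--Wang bundle over $Q_n$. You instead descend the immersion to a holomorphic isometric immersion $\varphi\colon M\lra\CP^{n+1}$ of the leaf space as a complete K\"ahler--Einstein hypersurface and invoke Smyth's classification ($\CP^n$ totally geodesic or $Q_n$), then lift back. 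Since Kenmotsu's theorem is essentially the Sasakian avatar of Smyth's, the two routes are parallel; yours trades the black-box citation for an explicit descent-and-lift, which makes the geometry more transparent but obliges you to carry out the bookkeeping that the citation would have absorbed.

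Two of the loose ends you flag deserve to actually be closed. First, for a complete \emph{non-compact} regular Sasakian manifold the Reeb flow need not a priori integrate to an $S^1$-action (the orbits could be lines, as on $\R^{2n+1}$); the paper only claims a fibration over a K\"ahler manifold via \cite{reinhart59}. In your setting this is repaired a posteriori: $M$ comes out compact and simply connected, and if the fibre were $\R$ then $\di\eta=\pi^*\omega$ would make $\omega$ exact on a compact K\"ahler manifold, a contradiction; so the fibre is a circle. Second, your lens-space worry in the case $M=\CP^n$ is resolved by Reeb-equivariance: $\psi(\phi_t(p))=\phi_t(\psi(p))$, so a Reeb orbit of period $2\pi/m$ upstairs would force the Hopf circle to close after time $2\pi/m$, whence $m=1$ and $S\cong S^{2n+1}$. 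With these two points spelled out, your proof is complete.
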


\begin{cor}\label{Cor4}
Let  $S$  be a $(2n+1)$-dimensional compact Sasakian manifold endowed with a Sasaki-Ricci soliton. 
	Suppose there exists an immersion $\psi\colon S\lra S^{2n+5}$ into the standard Sasakian sphere.
	Then $(S,g)$ is Sasaki equivalent to $S^{2n+1}$ or the Boothby-Wang bundle over the complex quadric $Q_n\in\CP^{n+1}$.
\end{cor}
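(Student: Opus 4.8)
The strategy is first to reduce to the $\eta$-Einstein case, and then to recognise $S$, through the Boothby--Wang correspondence, as a circle bundle over a compact K\"ahler--Einstein submanifold of $\CP^{n+2}$ of complex codimension two, so that the classification of such submanifolds applies.

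I would begin by invoking Corollary~\ref{Cor2}. Since $S$ is compact and admits an immersion into the standard sphere $S^{2n+5}$, that corollary shows that either $S$ is Sasaki equivalent to $S^{2n+1}$, in which case the first alternative holds and we are done, or $S$ is $\eta$-Einstein with rational constants $(\lambda,\nu)$ satisfying $\lambda>2n$. As remarked after Corollary~\ref{Cor2}, the existence of the immersion into the regular sphere forces $S$ to be regular. From now on I assume that $S$ is a compact regular $\eta$-Einstein Sasakian manifold with $\lambda>2n$.

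Next I would pass to the transverse geometry. As $S$ is compact and regular, its Reeb flow integrates to a free isometric $S^1$-action, and $S$ is the total space of a Boothby--Wang fibration over a compact complex manifold $M$ with $\dim_\C M=n$. The $\eta$-Einstein condition with $\lambda>2n$ is equivalent, via the standard relation $\Ric^T=(\lambda+2)\,g^T$ between the transverse and the Sasakian Ricci tensors, to $M$ being a compact K\"ahler--Einstein manifold with positive Einstein constant $\lambda+2$. Likewise $S^{2n+5}=S^{2(n+2)+1}$ is the Boothby--Wang fibration over $\CP^{n+2}$ endowed with the Fubini--Study metric. Since $\psi$ is a Sasakian immersion it carries the Reeb field of $S$ to that of $S^{2n+5}$, hence is equivariant for the two circle actions and descends to a map $\phi\colon M\lra\CP^{n+2}$. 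Compatibility of $\psi$ with the transverse complex structures and metrics makes $\phi$ a holomorphic isometric immersion of complex codimension $(n+2)-n=2$, so that $M$ is a compact K\"ahler--Einstein submanifold of $\CP^{n+2}$ of complex codimension two.

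I would then apply part $(i)$ of the main theorem of \cite{kenmotsu69}, which classifies such submanifolds, to deduce that $M$ is either a totally geodesic $\CP^n$ or the complex quadric $Q_n$ sitting inside a hyperplane $\CP^{n+1}\subset\CP^{n+2}$. Lifting back through the Boothby--Wang fibration recovers $S$ as the associated circle bundle, that is $S^{2n+1}$ in the first case and the Boothby--Wang bundle over $Q_n$ in the second, which is exactly the claimed dichotomy and the Sasakian counterpart of \cite[Theorem~2]{cappellettimontanoloi19}. The main obstacle I anticipate is to ensure that the positive Einstein constant $\lambda+2$ coming from the $\eta$-Einstein data is consistent with the Fubini--Study normalisation inherited from the ambient sphere, so that Kenmotsu's classification applies verbatim and no submanifold other than $\CP^n$ and $Q_n$ can survive the codimension-two constraint. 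This is precisely where the compactness of $S$ is indispensable: unlike the hypersurface situation of Corollary~\ref{Cor3}, in which the complete classification of Einstein K\"ahler hypersurfaces already suffices, the codimension-two classification is available only for compact submanifolds.
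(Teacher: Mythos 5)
Your overall strategy matches the paper's: reduce to the $\eta$-Einstein case (the paper does this by rerunning the argument of Theorem~\ref{ThmMain1}, you do it via Corollary~\ref{Cor2}, which amounts to the same thing), descend through the Boothby--Wang fibration to a compact K\"ahler--Einstein submanifold of $\CP^{n+2}$ of complex codimension two, and then invoke a classification. The reduction and the descent of $\psi$ to a holomorphic isometric immersion $\phi\colon M\lra\CP^{n+2}$ are correct.

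The gap is in the decisive final step: you classify the codimension-two submanifolds by appealing to part $(i)$ of the main theorem of \cite{kenmotsu69}, but that theorem does not cover this situation. Kenmotsu's result concerns invariant submanifolds of real codimension two in a Sasakian space form, i.e.\ the transversally \emph{hypersurface} case $S^{2n+1}\lra S^{2n+3}$; it is the reference the paper uses for Corollary~\ref{Cor3}, not for Corollary~\ref{Cor4}. For the present corollary the immersion $S^{2n+1}\lra S^{2n+5}$ has complex codimension two transversally, and the classification you need is \cite[Theorem~2]{cappellettimontanoloi19} (at the Sasakian level), which rests on the genuinely harder classification of compact K\"ahler--Einstein submanifolds of codimension two in complex projective space (due to Tsukada, not Kenmotsu). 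Your closing paragraph shows you are aware that the codimension-two classification is a different and more delicate result than the hypersurface one, which makes the attribution to Kenmotsu all the more problematic: as written, the theorem you cite simply does not apply, and the conclusion does not follow from it. Replacing that citation by \cite[Theorem~2]{cappellettimontanoloi19} repairs the proof and brings it in line with the paper's.
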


We believe that these results can be generalized to arbitrary codimension.
Such a generalization of Corollary~\ref{Cor3} and Corollary~\ref{Cor4} is related to an analogous conjecture on Sasakian immersions of $\eta$-Einstein manifolds, see \cite{cappellettimontanoloi19} and referecences therein.

\vspace{1cm}


\section{Sasakian manifolds and Sasaki-Ricci solitons}\label{sectionbackground}

\subsection{Sasakian manifolds and immersions}

We begin with some definitions and known results, for a more exhaustive treatment we refer to the monograph by Boyer and Galicki~\cite{boyergalicki08}. 
All manifolds are assumed to be smooth, connected and oriented.

A \textit{K-contact structure} $(S,\eta,\phi,R,g)$ on a manifold $S$ consists of a contact form $\eta$ and an endomorphism $\phi$ of the tangent bundle $TS$ satisfying the following properties:
\begin{enumerate}
	\item[$\bullet$] $\phi^2=-\Id+R\otimes\eta$ where $R$ is the Reeb vector field of $\eta$,
	\item[$\bullet$] $\phi_{\vert\D}$ is an almost complex structure compatible with the symplectic form $\di\eta$ on $\D=\ker\eta$,
	\item[$\bullet$] The Reeb vector field $R$ is Killing with respect to the metric $g(\cdot,\cdot)=\di\eta(\phi\cdot,\cdot)+\eta(\cdot)\eta(\cdot)$.
\end{enumerate}   
Given such a structure one can consider the almost complex structure $I$ on the Riemannian cone $\big( S\times(0,\infty),t^2g+\di t^2\big)$ given by
\begin{enumerate}
	\item[$\bullet$] $I=\phi$ on $\D=\ker\eta$,
	\item[$\bullet$] $I(R)=t\partial_t$.
\end{enumerate} 
A \textit{Sasakian structure} is a K-contact structure $(S,\eta,\phi,R,g)$ such that the associated almost complex structure $I$ is integrable. 
We call a manifold $S$ \textit{Sasakian} if it is equipped with a Sasakian structure.

A Sasakian manifold is called \textit{regular} (respectively \textit{quasi-regular}, \textit{irregular}) if its Reeb foliation is such.
Every regular compact Sasakian manifold is a \textit{Boothby-Wang fibration} $S$ over a projective manifold $(K,\omega)$ with $\omega$ representing an integral class (\cite{boothbywang58,boyergalicki08}), that is, the principal $S^1$-bundle $\pi\colon S\longrightarrow K$ with Euler class $[\omega]$ and connection $1$-form $\eta$ such that $\pi^*(\omega)=\di\eta$.
This is not necessarily true in the non-compact case. Nevertheless one can prove a similar statement under some additional conditions, cf. the proof of Theorem~\ref{ThmMain1}.
In general the Reeb foliation $\F$ of a Sasakian structure is transversally K\"ahler. This endows the space of leaves with a K\"ahler structure.

Two Sasakian manifolds $(S_1,\eta_1,\phi_1,R_1,g_1)$ and  $(S_2,\eta_2,\phi_2,R_2,g_2)$ are \text{equivalent} if there exists a diffeomorphism $f\colon S_1\lra S_2$ such that
\begin{equation*}
f^*\eta_2=\eta_1\hspace{5mm}\mbox{and}\hspace{5mm}f^*g_2=g_1.
\end{equation*}
If this holds then $f$ also preserves the endomorphism $\phi_1$ and the Reeb vector field.
 A Sasakian equivalence from a Sasakian manifold $(S,\eta,\phi,R,g)$ to itself is called a \textit{Sasakian transformation} of $(S,\eta,\phi,R,g)$.
 
 In this article we discuss Sasakian immersions into Sasakian space forms.
Given two Sasakian manifolds$(S_1,\eta_1,\phi_1,R_1,g_1)$ and  $(S_2,\eta_2,\phi_2,R_2,g_2)$, a \textit{Sasakian immersion} of $S_1$ in $S_2$ is an immersion $\psi\colon S_1\lra S_2$ such that

\begin{align*}
\psi ^*\eta_2&=\eta_1,\hspace{20mm}\psi ^*g_2=g_1,\\
\psi_*R_1&=R_2\hspace{5mm}\mbox{and}\hspace{5mm}\psi_*\circ \phi_1=\phi_2\circ\psi_*.
\end{align*}

\subsection{Sasakian $\eta$-Einstein manifolds and Sasaki-Ricci solitons}

The Riemannian properties of Sasakian manifold, in particular Sasaki-Einstein and $\eta$-Einstein metrics, have received great attention from many authors, partially due to their connection to physics. 
We recall now the definitions and main properties of these structures with a particular focus on the relation with the transverse K\"ahler geometry, we refer the interested reader to \cite{boyergalicki08,boyergalickimatzeu06}

On a Sasakian manifold $(S,\eta,\phi,R,g)$ the tangent bundle splits canonically as $TS=\D\oplus T_\F$ where $\D=\ker\eta$ and $T_\F$ denotes the tangent to the Reeb foliation $\F$. The transverse K\"ahler geometry is given by $(\D,\phi_{\vert_\D},\di\eta)$.
When the space of leaves of the Reeb foliation is a K\"ahler manifold $(K,J,\omega)$ we have a fibration $\pi\colon S\lra K$ such that
\begin{equation*}
\pi^*\omega=\di\eta\hspace{5mm}\mbox{and}\hspace{5mm}\pi_*\circ\phi=J\circ\pi_*.
\end{equation*}
In virtue of this the metric decomposes as 
\begin{equation}\label{EqMetricSplit}
g=g^T\oplus\eta\otimes\eta
\end{equation}
where $g^T(\cdot,\cdot)=\di\eta(\cdot,\phi\cdot)$.
With an abuse of notation we write $g^T$ for both the transverse metric and the metric on $K$.
It follows from \eqref{EqMetricSplit} that the Riemannian properties of $S$ can be expressed in terms of those of the transverse K\"ahler geometry and of the contact form $\eta$. For instance, the Ricci tensor of $g$ is given by
\begin{equation}\label{EqRicciSplit}
\Ric_g=\Ric_{g^T}-2g.
\end{equation}
A Sasakian manifold $(S,\eta,\phi,R,g)$ is said to be \textit{$\eta$-Einstein} if the Ricci tensor satisfies
\begin{equation}\label{EqEtaEinstein}
\Ric_g=\lambda g+\nu \eta\otimes\eta
\end{equation}
for some constants $\lambda,\nu \in\R$. 
It follows from \eqref{EqRicciSplit} and \eqref{EqEtaEinstein} that a Sasakian manifold is $\eta$-Einstein with constants $(\lambda,\nu)$ if, and only if, its transverse geometry is K\"ahler-Einstein with Einstein constant $\lambda+2$.
Since on a K-contact manifold the Ricci tensor satisfies $\Ric(R,X)=2n\eta(X)$ we have that $\lambda+\nu=2n$.
Therefore, a Sasakian $\eta$-Einstein manifold is Einstein if and only if $\lambda=2n$, that is, if it has a transverse K\"ahler-Einstein geometry with Einstein constant $2n+2$.

Generalizing $\eta$-Einstein manifolds are Sasaki-Ricci solitons. In order to introduce them we need to recall known facts about the transverse K\"ahler geometry.
On a Sasakian manifold $S$ of dimension $2n+1$ there exists a covering $\{U_\alpha\}$ with foliated charts $\varphi_\alpha\colon U_\alpha\lra\varphi_\alpha(U_\alpha)\subset\R\times\C^n$. Denote by $\pi_\alpha$ the following map $$\pi_\alpha=\pi_{\C^n}\circ\varphi_\alpha\colon U_\alpha\lra V_\alpha\subset\C^n\,\, .$$
The Sasakian structure is transversally holomorphic, that is, the maps 
$\pi_\alpha\circ\pi_\beta^{-1}\colon V_\alpha\cap V_\beta\lra V_\alpha\cap V_\beta$ are biholomorphisms.
A \textit{basic $p$-form} $\alpha$ on $S$ is a $p$-form such that
$$\iota_R\alpha=0,\hspace{5mm}\Lie_R\alpha=0\, .$$
It is easy to see that the exterior derivative $\di$ sends basic forms to basic forms. Therefore we denote it by $\di_B$ when we want to emphasize that it is restricted to basic forms.
Suppose now that $(x,z_1,\ldots,z_n)$ are local coordinates in $U_\alpha$. If a basic form $\alpha$ can be written locally as 
$$\alpha=a_{i_1,\ldots,i_{p+q}}\di z_{i_1}\wedge\cdots\wedge\di z_{i_p}\wedge\di\bar{z}_{i_{p+1}}\wedge\cdots\wedge\di\bar{z}_{i_{p+q}},$$
then $\alpha$ is said to be a basic $(p,q)$-form. One can show that such a local form is also of type $(p,q)$ in any chart $U_\beta$ with $U_\alpha\cap U_\beta\neq\emptyset$.
Therefore we have  well defined operators $\de_B$, respectively $\deb_B$, of degree $(1,0)$, resp. $(0,1)$, such that $\di_B=\de_B+\deb_B$.
\begin{defin}
	A complex vector field $X$ on a Sasakian manifold $S$ is called Hamiltonian holomorphic if it satisfies the following conditions
	\begin{enumerate}[(a)]
		\item the vector field $\di\pi_\alpha(X)$ is holomorphic on $V_\alpha$,
		\item the function $u_X\colon=i\eta(X)$ is such that $\deb u_X=-i\iota_X\di\eta$. 
	\end{enumerate}
\end{defin}

In \cite{smoczykwangzhang10} Smoczyk, Wang and Zhang introduced the Sasaki-Ricci flow
\begin{equation}\label{EqSasRicFlow}
\frac{\di}{\di t}g^T(t)=-\left( \Ric_{g^T(t)} -\lambda g^T(t)\right)
\end{equation}
with the aim of proving the existence of $\eta$-Einstein metrics.
In order to study the Sasaki-Ricci flow on positive Sasakian manifolds, Futaki, Ono and Wang \cite{futakionowang09} defined Sasaki-Ricci solitons as a pair $(g,X)$ consisting of a Sasakian metric $g$ and a Hamiltonian holomorphic vector field $X$ such that
\begin{equation}\label{EqWrongSoliton}
\Ric_{g^T}=(2n+2)g^T+\Lie_Xg^T.
\end{equation}
The equation \eqref{EqWrongSoliton} is commonly used in literature to define Sasaki-Ricci solitons, see for instance \cite{petrecca16,tadano15}.
On the other hand, on a complex manifold $M$ a K\"ahler-Ricci soliton (KRS for short) is defined to be a pair $(g,X)$  where $g$ is a K\"ahler metric and $X$ is a holomorphic vector field on $M$ satisfying
\begin{equation}\label{EqKahlerSoliton}
\Ric_{g}=\lambda g+\Lie_Xg
\end{equation}
with $\lambda\in \R$. Although a compact K\"ahler manifold does not admit non-trivial KRS with $\lambda\leq 0$, these cases are widely studied on open K\"ahler manifolds. 
In analogy with the K\"ahler setting we generalize the above definition and give the following
\begin{defin}\label{DefSasakiSoliton}
A \textbf{Sasaki-Ricci soliton} (SRS in short) on a Sasakian manifold $S$ is a pair $(g,X)$ consisting of the Sasakian metric $g$ and a Hamiltonian holomorphic vector field $X$ such that
\begin{equation}
\Ric_{g^T}=\lambda g^T+\Lie_Xg^T
\end{equation}
for some $\lambda\in\R$.
\end{defin}
If a manifold $S$ is endowed wit a SRS, with an abuse of notation we will  simply say that $S$ is a SRS. 
One can easily construct examples of SRS on open Sasakian manifolds, also in the case where $\lambda\leq0$, as bundles over certain gradient KRS, see for instance \cite{cao96,cao97,feldmanilmanenknopf03}.

\begin{remark}\label{RmkInducedSoliton}
	By definition a Sasaki-Ricci soliton $(X,g)$ on a regular Sasakian manifold is a KRS on the transverse K\"ahler geometry. In particular, if the space of leaves of the Reeb foliation is a smooth manifold $K$, then it has a canonically induced KRS.
\end{remark}

\subsection{Sasakian space forms}
Let  $(S,\eta,\phi,R,g)$ be a Sasakian manifold.
If $\Sec$ is the ordinary Riemannian sectional curvature of $g$, then the \textit{$\phi$-sectional curvature $H$}
of $g$ is defined by 
$$H(X)=\Sec(X,\phi X)$$
for all vector fields $X$ of unit length orthogonal to $R$.

A \textit{Sasakian space form} $S(n,c)$ is a Sasakian manifold of dimension $2n+1$ with constant $\phi$-sectional curvature $H\equiv c$.
Tanno \cite{tanno69} proved that there are three types of Sasakian space forms, namely, those with $H\equiv c<-3,=-3$ and $>-3$.
These are analogous to complex space forms, that is, complex manifolds of constant holomorphic sectional curvature. Indeed, under the Boothby-Wang correspondence, constant $\phi$-sectional curvature $c$ corresponds precisely to constant holomorphic transverse sectional curvature $c+3$.

Explicitly Tanno proved that every Sasakian space form is a quotient of one of the following three by a subgroup of Sasakian transformations.
\begin{itemize}
	\item If $c>-3$, $S(n,c)$ is Sasaki equivalent to the Sasakian sphere $S^{2n+1}(c)$. This is the Boothby-Wang bundle over $\CP^n(c+3)$.
	\item If $c=-3$, $S(n,c)$ is Sasaki equivalent to $\R^{2n+1}(-3)$. The transverse K\"ahler structure is the standard one on $\C^n$.
	\item If $c<-3$, $S(n,c)$ is Sasaki equivalent to $B^{2n+1}_\C(c+3)\times\R$ where the transverse K\"ahler structure is that of the hyperbolic complex space $B^{2n+1}_\C(c+3)$ of constant holomorphic sectional curvature $c+3$.
\end{itemize}

Finally we make note of the Ricci tensor for the transverse K\"ahler structures of constant holomorphic sectional curvature $c+3$:
\begin{equation}\label{EqKahlerRicciHolo}
\Ric_{g^T}=\dfrac{n+1}{2}(c+3)g^T
\end{equation}
which in turn implies
\begin{equation}\label{EqSasakiRicciHolo}
\Ric_{g}=\left( \dfrac{n+1}{2}(c+3)-2\right) g+\left( 2n+2-\dfrac{n+1}{2}(c+3) \right)\eta\otimes\eta\,\, .
\end{equation}

\section{Proof of the main results}\label{sectionproofs}
\begin{proof}[Proof of Theorem~\ref{ThmMain1}]
Let $(X,g)$ be a Sasaki-Ricci soliton on a Sasakian manifold $S$. Suppose there exists a neighbourhood $U_p$ of a point $p\in S$ and an immersion $\psi\colon U_p\lra S(N,c)$  into a Sasakian space form $S(N,c)$ with $c\leq -3$. 

We cannot conclude that $S$ is an $S^1$-bundle over a K\"ahler manifold because $S$ is not necessarily compact.
Nevertheless, the Reeb foliation still defines a fibration $\pi\colon S\lra K$ over a K\"ahler manifold because $S$ is regular and complete, see \cite{reinhart59}.

Now $\psi$ covers a K\"ahler immersion into a definite complex space form $K(N,c+3)$ of dimension $2N$ and constant holomorphic curvature $c+3$, see \cite{cappellettimontanoloi19,harada72}.
Thus we get the commutative diagram
$$
\begin{tikzcd}
U_p \arrow[r, "\psi"] \arrow[d, "\pi"'] & S(N,c)  \arrow[d, "\pi'"] \\
V_x  \arrow[r, "\phi"'] &   K(N,c+3)
\end{tikzcd}
$$
where $x=\pi(p)$ and $V_x=\pi(U_p)\subset K$.

Moreover, the space of leaves $K$ of the Reeb fibration is a K\"ahler manifold equipped with a K\"ahler--Ricci soliton $(\di\pi(X),g^T)$, cf. Remark~\ref{RmkInducedSoliton}.
The existence of the Ricci soliton $(g^T,\di \pi(X))$ implies that the K\"ahler metric $g^T$ is real-analytic, (see \cite[Corollary~1.3]{kotschwar13}),.
Therefore $K$ is a complex manifold equipped with a real-analytic K\"ahler metric which admits a local immersion $V_p\lra K(N,c+3)$ into a complex space form. 
Then a classical result of Calabi \cite{calabi53} implies that for every point $y\in K$ there exists a neighbourhood $V_y$ and a K\"ahler immersion $V_y\lra K(N,c+3)$.

Hence, the main result of \cite{loimossa20} implies that $(V_y,g^T)$ is K\"ahler--Einstein.
As reviewed in Section~\ref{sectionbackground}, this is equivalent to $\pi^{-1}(V_y)$ being $\eta$-Einstein.
Now $(S,g)$ is $\eta$-Einstein because the sets $\pi^{-1}(V_y)$ cover $S$.

Therefore the thesis of Theorem~\ref{ThmMain1} follows from a result of Bande, Cappelletti--Montano and Loi \cite{bandecappellettimontanoloi20} on immersions of $\eta$-Einstein manifolds into Sasakian space forms.
\end{proof}

\begin{proof}[Proof of Theorem~\ref{ThmMain2}]
	Let $(X,g)$ be a Sasaki-Ricci soliton on a Sasakian manifold $S$. Suppose there exists a neighbourhood $U_p$ of a point $p\in S$ and an immersion $\psi\colon U_p\lra S^{2N+1}$  into the standard Sasakian sphere. 
	
	Following the arguments in the proof of Theorem~\ref{ThmMain1} we see that $S$ is the total space of a fibration $\pi\colon S\lra K$ over a K\"ahler manifold $K$.
	Moreover,  $\psi$ covers a K\"ahler immersion
	
	$$
	\begin{tikzcd}
	U_p \arrow[r, "\psi"] \arrow[d, "\pi"'] & S^{2N+1}  \arrow[d, "\pi'"] \\
	V_x \arrow[r, "\phi"'] &   \CP^N
	\end{tikzcd}
	$$
	where $V_x=\pi(U_p)$ and $\pi'$ is the standard Hopf bundle.
	
	Hence, every point of $K$ admits a neighbourhood which can be immersed in $\CP^N$. This implies that $K$ is a \K-Einstein manifold with Einstein constant $\lambda+2=4\mu$ for some rational number $\mu$. We conclude that $S$ is $\eta$-Einstein with constants $(\lambda,\nu)$ given by $\lambda=4\mu-2$.
\end{proof}

\begin{proof}[Proof of Corollary~\ref{Cor2}]
Since $S$ is admits a global Sasakian immersion into a regular Sasakian manifold, it is itself regular, cf. \cite[Proposition~1]{cappellettimontanoloi19}.
Thus $S$ is a Boothby-Wang bundle $\pi\colon S\lra K$ over a compact K\"ahler manifold $K$ which is endowed with the induced KRS.

Moreover, $K$ admits a K\"ahler immersion in $\CP^N$ which is covered by $\psi$:
	$$
\begin{tikzcd}
S \arrow[r, "\psi"] \arrow[d, "\pi"'] & S^{2N+1}  \arrow[d, "\pi'"] \\
K \arrow[r, "\phi"'] &   \CP^N\,\,\, .
\end{tikzcd}
$$
Again by \cite{loimossa20} $K$ is \K-Einstein with rational Einstein constant $\lambda+2$.
Therefore $S$ is a $\eta$-Einstein manifold with constants $(\lambda,\nu)$ which admits a Sasakian immersion in $S^{2N+1}$.

Now \cite[Theorem~3]{cappellettimontanoloi19} implies that $\lambda\geq 2n$.
Moreover, if $\lambda=2n$, then $S$ is Sasaki-Einstein and \cite[Theorem~1]{cappellettimontanoloi19} implies that $S$ is Sasakian equivalent to $S^{2n+1}$.
\end{proof}

\begin{remark}
	With the definition of SRS given in \cite{futakionowang09}  Theorem~\ref{ThmMain1} and Corollary~\ref{Cor2} imply that a compact SRS immersed in a Sasakian space form is necessarily Sasakian equivalent to a standard sphere $S^{2n+1}$.
\end{remark}

\begin{proof}[Proof of Corollary~\ref{Cor3} and Corollary~\ref{Cor4}]
	Notice that the Sasakian structure on $S$ is regular becuse it admits an immersion into a regular Sasakian manifold.
	
   We want to prove now that $S$ fibers over a K\"ahler manifold $K$. As discussed in the proofs above this follows from the classical result of Boothby and Wang \cite{boothbywang58} when $S$ is compact and from \cite{reinhart59}  when $S$ is complete.

 The same line of arguments as in the proof of Theorem~\ref{ThmMain1} shows that $S$ is $\eta$-Einstein.
 Now Corollary~\ref{Cor3} follows from the main result of \cite{kenmotsu69} while Corollary~\ref{Cor4} is a consequence of \cite[Theorem~2]{cappellettimontanoloi19}
\end{proof}

\end{document}